\newtheorem{theorem}{Theorem}[section]
\newtheorem{lemma}[theorem]{Lemma}
\newtheorem{corollary}[theorem]{Corollary}
\theoremstyle{definition}
\newtheorem{definition}[theorem]{Definition}
\newtheorem{example}[theorem]{Example}
\theoremstyle{remark}
\newtheorem*{remark}{Remark}
\newtheorem*{ackn}{Acknowledgements}
\numberwithin{equation}{section}
\newcommand{\UU}{\operatorname{U}}
\newcommand{\HH}{\operatorname{H}_{m}}
\begin{document}

\title[Complex Hessian Equations and the Dinew-Ko{\l}odziej Estimate]{Continuity of Solutions to Complex Hessian Equations via the Dinew-Ko{\l}odziej Estimate}

\author{Per \AA hag}\address{Department of Mathematics and Mathematical Statistics\\ Ume\aa \ University\\SE-901~87 Ume\aa \\ Sweden}\email{per.ahag@umu.se}

\author{Rafa\l\ Czy{\.z}}\address{Faculty of Mathematics and Computer Science, Jagiellonian University, \L ojasiewicza~6, 30-348 Krak\'ow, Poland}
\email{rafal.czyz@im.uj.edu.pl}

\keywords{Complex Hessian equation, Dinew-Ko{\l}odziej estimate, $m$-subharmonic function, regularity}
\subjclass[2020]{Primary 31C45, 35B65, 35B35; Secondary  32U05, 35J60.}

\begin{abstract}
This study extends the celebrated volume-capacity estimates of Dinew and Ko{\l}odziej, providing a foundation for examining the regularity of solutions to boundary value problems for complex Hessian equations. By integrating the techniques established by Dinew and Ko{\l}odziej and incorporating recent advances by Charabati and Zeriahi, we demonstrate the continuity of the solutions.
\end{abstract}

\maketitle

\centerline {\bf \today}

\section{Introduction}
Let $K\subset\mathbb{C}$ be a compact subset
in the complex plane with area $A(K)$ and logarithmic capacity $c(K)$. In 1928, Pólya~\cite{Polya28} established the following inequality:
\begin{equation*}
A(K) \leq \pi c(K)^2.
\end{equation*}
which has found widespread use and has been generalized in various contexts within analysis and geometry. For example, it was extended in~\cite{ACKPZ09} and used to affirm a conjecture proposed by Demailly~\cite{Dem09}.

Of particular interest to this paper is the work by Dinew and Ko\l odziej~\cite{DK14}, who proved that the volume of a relatively compact set in $\mathbb{C}^n$ can be estimated using the Hessian capacity:
\begin{equation}\label{intr:DK}
V_{2n}(K) \leq C(\operatorname{cap}_m(K))^{\alpha},
\end{equation}
where $dV_{2n}$ denotes the Lebesgue measure in $\mathbb{R}^{2n}$,  $1 < \alpha < \frac{n}{n-m}$ and $\operatorname{cap}_m(K)$ is defined as the Hessian capacity. This result is crucial for studying the complex Hessian operator on compact K\"ahler manifolds.

In Theorem~\ref{DK}, we extend their volume-capacity estimate using techniques from Orlicz theory.

\medskip

\noindent{\textbf{Theorem~\ref{DK}.}}
\emph{Let $\Omega$ be a bounded $m$-hyperconvex domain in $\mathbb C^n$. Then for any $0<\epsilon\leq \frac {n+1}{3n}$ there exist constants $C_1, C_2>0$ such that for all $K\Subset \Omega$ it  holds:
\begin{equation}\label{int:est}
V_{2n}(K)\leq C_1\operatorname {cap}_m(K)^{\frac {n}{n-m}}\operatorname{W}_0\left(C_2\operatorname {cap}_m(K)^{\frac {-1}{m(1+\epsilon)}}\right)^{\frac {nm(1+\epsilon)}{n-m}},
\end{equation}
where $\operatorname{W}_0$ is the Lambert $W$ function.}

\medskip

In 1986, Vinacua \cite{Vin86} (see also \cite{Vin88}) expanded upon the foundational work of Caffarelli, Nirenberg, and Spruck \cite{CNS85} by introducing complex Hessian equations:
\begin{equation}\label{Intr:HH}
\HH(u) = \mu,
\end{equation}
bridging classical and modern theories in partial differential equations. As defined (Definition~\ref{m-sh}), the $1$-Hessian operator, $\operatorname{H}_1$, corresponds to the classical Laplace operator on subharmonic functions, herein referred to as $1$-subharmonic functions. Similarly, the $n$-Hessian operator, $\operatorname{H}_n$, corresponds  to the complex Monge-Ampère operator for plurisubharmonic functions. For $k = 2, \dots, n-1$, the $m$-Hessian operators form a sequence of partial differential operators, spanning from the Laplace to the complex Monge-Ampère operators.

The integrability of general $m$-subharmonic functions significantly differs from that of $n$-subharmonic functions. While all plurisubharmonic functions are locally $L^p$ integrable for any $p > 0$, this is not necessarily true for $m$-subharmonic functions. B\l ocki \cite{Blo05} conjectured that $m$-subharmonic functions should be locally $L^p$ integrable for $p < \frac{nm}{n-m}$, a conjecture partially verified in \cite{AC20,DK14}.

The complex Hessian equation and $m$-subharmonic functions have attracted widespread attention. A significant advancement was made by B\l ocki in 2005, who extended these concepts to non-smooth admissible functions and introduced pluripotential methods \cite{Blo05}. More recently, Lu adapted Cegrell's framework~\cite{Ceg98,Ceg04,Ceg08} for the complex Hessian equations~\cite{Lu12,Lu13S,Lu13V,Lu15}.

In this paper, we build upon the techniques of Dinew and Kołodziej~\cite{DK14} and incorporate recent insights from Charabati and Zeriahi~\cite{CZ23} to address the regularity of solutions  for the complex Hessian equations.

 Consider a bounded strictly $m$-pseudoconvex domain $\Omega \subset \mathbb{C}^n$. For a given density function $f$ and a boundary value function $g \in \mathcal{C}(\partial \Omega)$, the problem of interest is:
\begin{equation}
\label{Intr: DP2}
\begin{aligned}
\HH(U(f,g)) &= f dV_{2n}, \\
\lim_{z \to w} U(f,g)(z) &= g(w), \quad \text{for all } w \in \partial \Omega.
\end{aligned}
\end{equation}
If the density function $f \in L^{p}$ for $p > \frac{n}{m}$, then $\UU(f,g)$ is continuous and has been subsequently proven to be Hölder continuous, as documented in~\cite{DK14, Bou19, Cha16, KN20, N14}.
If $p<\frac{n}{m}$, then the solution to~(\ref{Intr: DP2}) need not to be bounded~\cite{DK14}, which is a significant contrast to the case when $m=n$. In~\cite{AC22}, Cegrell's energy classes played a crucial role in studying the regularity of unbounded $m$-subharmonic functions.

Our goal is to investigate further the regularity of solutions to~\eqref{Intr: DP2} when the density function $f$ belongs to the Orlicz space $L^{n/m}(\log L)^\alpha$ with $\alpha > 2n$. We establish the following result:

\medskip

\noindent\textbf{Theorem~\ref{t1}.} \emph{Let $\Omega \subset \mathbb{C}^n$ be a bounded strictly $m$-pseudoconvex domain, let $f \in L^{\frac{n}{m}}(\log L)^{\alpha}$ for $\alpha > 2n$, and let $g \in \mathcal{C}(\partial \Omega)$. Then, the unique solution $U(f, g)$ of the Dirichlet problem for the complex Hessian operator~\eqref{Sec5: DP} is continuous on $\bar{\Omega}$. Moreover, the following estimate holds:
\begin{multline*}
\|U(f_1,g_1)-U(f_2,g_2)\|_{\infty} \leq \|g_1-g_2\|_{\infty}+ C_1 \|f_1-f_2\|_{\alpha}^{-\frac{1}{\gamma}}\\
+ C_2 e_{m,m}(U(|f_1-f_2|,0))^{\frac{1}{2m}} \exp\left(C_3 \|f_1-f_2\|_{\alpha}^{-\frac{1}{\gamma}}\right),
\end{multline*}
where $\gamma = (1+\epsilon)m - \frac{\alpha m}{n}$ with $0 < \epsilon < \min\left(\frac{n+1}{3n},\frac {\alpha}{n}-2\right)$. Here, $\|f\|_{\alpha}$ denotes the norm in $L^{\frac{n}{m}}(\log L)^{\alpha}$, and $C_1, C_2, C_3$ are positive universal constants. Moreover, $e_{m,m}(u)$ is defined as $\int_{\Omega} (-u)^m \HH(u)$.}

\medskip

This paper is organized as follows: Section~\ref{sec:background} provides the necessary background on complex Hessian equations and the theoretical framework used throughout the paper, including discussions on the theory of Orlicz spaces. Section~\ref{Sec3: DK} contains the proof of Theorem~\ref{DK}, while in Section~\ref{Sec: Continuous Solution}, we present the proof of Theorem~\ref{t1}. The final section, Section~\ref{section_bs}, studies the cases when $n<\alpha \leq 2n$ and $\alpha\leq n$.

\medskip

We want to emphasize that the proof of Theorem~\ref{DK} and the results in Section~\ref{section_bs} rely heavily on the properties of plurisubharmonic functions. Notably, significant insights into $m$-subharmonic functions can be gained by examining the subset of plurisubharmonic functions, a phenomenon first observed by Dinew and Kołodziej~\cite{DK14}.

\medskip

\begin{ackn}
We extend our heartfelt appreciation to Chinh H. Lu for his invaluable insights and discussions, which greatly enriched a   preliminary version of this paper.
\end{ackn}

\section{Preliminaries}\label{sec:background}

This section is organized as follows: Section~\ref{SS: Cegrell Classes} provides the fundamental definitions of the generalized potential theory we are interested in. Section~\ref{SS: Orlicz Spaces} offers basic information on Orlicz spaces, specifically $L^{\frac{n}{m}}(\log L)^{\alpha}$. Finally, Section~\ref{SS: Lambert} reviews some essential aspects of the Lambert $W$ function.

\subsection{Generalized potential theory}\label{SS: Cegrell Classes}

This subsection provides foundational definitions and results for $m$-subharmonic functions and the complex Hessian operator. Consider $\Omega \subset \mathbb{C}^n$, where $n \geq 2$, as a bounded domain, and let $1 \leq m \leq n$. Define $\mathbb{C}_{(1,1)}$ as the set of $(1,1)$-forms with constant coefficients. We then define
\[
\Gamma_m = \left\{\alpha \in \mathbb{C}_{(1,1)} : \alpha \wedge (dd^c|z|^2)^{n-1} \geq 0, \ldots, \alpha^m \wedge (dd^c|z|^2)^{n-m} \geq 0 \right\}.
\]

\begin{definition}\label{m-sh}
Let $n \geq 2$, and $1 \leq m \leq n$. Assume $\Omega \subset \mathbb{C}^n$ is a bounded domain. A function $u$, defined on $\Omega$ and subharmonic, is said to be \emph{$m$-subharmonic} if it satisfies
\[
dd^c u \wedge \alpha_1 \wedge \dots \wedge \alpha_{m-1} \wedge (dd^c|z|^2)^{n-m} \geq 0,
\]
in the sense of currents for all $\alpha_1, \ldots, \alpha_{m-1} \in \Gamma_m$. We denote the set of all such $m$-subharmonic functions on $\Omega$ by $\mathcal{SH}_m(\Omega)$.
\end{definition}

\begin{definition}\label{prel_hcx}
Let $n \geq 2$, and $1 \leq m \leq n$. A bounded domain $\Omega \subset \mathbb{C}^n$ is termed \emph{$m$-hyperconvex} if there is a non-negative, $m$-subharmonic exhaustion function, i.e., there exists an $m$-subharmonic function $\varphi: \Omega \to (-\infty,0]$ such that for every $c < 0$, the closure of $\{z \in \Omega : \varphi(z) < c\}$ is compact within $\Omega$.
\end{definition}

For additional insights into $m$-hyperconvex domains, we refer to~\cite{ACH18}.

\begin{definition}\label{stricm-hyp}
An open set $\Omega \subset \mathbb{C}^n$ is \emph{strictly $m$-pseudoconvex} if it admits a smooth defining function $\rho$ which is strictly $m$-subharmonic in a neighborhood of $\bar{\Omega}$ and satisfies $|\nabla \rho| > 0$ at each point in $\partial\Omega = \{\rho = 0\}$.
\end{definition}

Next, we introduce function classes essential to this paper.
A function $\varphi$, defined on an $m$-hyperconvex domain $\Omega$ and $m$-subharmonic, belongs to $\mathcal{E}^0_{m}(\Omega)$ if it is bounded,
\[
\lim_{z \rightarrow \xi} \varphi(z) = 0 \text{ for every } \xi \in \partial \Omega,
\]
and
\[
\int_{\Omega} \operatorname{H}_m(\varphi) < \infty.
\]

\begin{definition}\label{CC}
Let $n \geq 2$, $1 \leq m \leq n$, and $p \geq 0$. A function $u$ defined on a bounded $m$-hyperconvex domain $\Omega$ in $\mathbb{C}^n$ belongs to $\mathcal{F}_{m}(\Omega)$ if there exists a decreasing sequence $\{\varphi_j\}$, $\varphi_j \in \mathcal{E}^0_{m}(\Omega)$, converging pointwise to $u$ as $j \to \infty$, and $\sup_j \int_{\Omega} \operatorname{H}_m(\varphi_j)< \infty$.
\end{definition}

In~\cite{Lu12, Lu15}, it was shown that for a function $u \in \mathcal{F}_{m}(\Omega)$, the complex Hessian operator $\operatorname{H}_m(u)$ is well-defined and given by
\[
\operatorname{H}_m(u) = (dd^c u)^m \wedge (dd^c|z|^2)^{n-m},
\]
where $d = \partial + \bar{\partial}$ and $d^c = \sqrt{-1} (\bar{\partial} - \partial)$.

Let us recall the Hessian capacity:
\[
\operatorname{cap}_{m}(E) = \sup \left\{ \int_E \operatorname{H}_m(u) : u \in \mathcal{SH}_m(\Omega), -1 \leq u \leq 0 \right\}.
\]

\subsection{Orlicz Spaces} \label{SS: Orlicz Spaces}

This subsection introduces some notations and elementary facts concerning Orlicz spaces, which will be useful in later discussions. This section is based on~\cite{M89}.

Let $\varphi: [0,\infty) \to [0,\infty)$ be an increasing, continuous, and convex function such that $\varphi(0) = 0$, $\lim_{t \to 0^+} \frac{\varphi(t)}{t} = 0$, and $\lim_{t \to \infty} \frac{\varphi(t)}{t} = \infty$. We shall call such a function \emph{admissible}.

Let $X$ be the space of measurable functions (with respect to the Lebesgue measure) on $\Omega$. Define a modular on $X$ by
\[
\rho(f) = \int_{\Omega} \varphi(|f|) dV_{2n},
\]
and introduce the Orlicz class:
\[
L_0^{\varphi} = \{f \in X : \rho(f) < \infty\}.
\]
The Orlicz space $L^{\varphi}$ is the smallest vector space containing $L_0^{\varphi}$. Moreover, $L^{\varphi}$ is a Banach space equipped with the Orlicz norm
\[
\|f\|^0_{\varphi} = \sup\left\{\int_{\Omega} |fg| dV_{2n} : \int_{\Omega} \varphi^*(|g|) dV_{2n} \leq 1\right\}
\]
or the equivalent Luxemburg norm
\[
\|f\|_{\varphi} = \inf\left\{\lambda > 0 : \int_{\Omega} \varphi\left(\frac{|f|}{\lambda}\right) dV_{2n} \leq 1\right\},
\]
where $\varphi^*$, the Legendre transform of $\varphi$ is defined as
\[
\varphi^*(s) = \sup_{t \geq 0} (st - \varphi(t)).
\]
The Legendre transform $\varphi^*$ is also called the complementary function in the sense of Young.

Let us now recall the following version of Young's inequality: for all $t, s \geq 0$,
\[
st \leq \varphi(t) + \varphi^*(s),
\]
and note that
\[
\|f\|_{\varphi} \leq \|f\|^0_{\varphi} \leq 2\|f\|_{\varphi}.
\]
Hence, by the Young inequality and the definition of the Orlicz norm,
\begin{multline}\label{o2}
\|f\|^0_{\varphi} = \sup_{g} \left(\int_{\Omega} |fg| dV_{2n} : \int_{\Omega} \varphi^*(|g|) dV_{2n} \leq 1\right) \\ \leq \sup_{g} \left(\int_{\Omega} \varphi(|f|) + \varphi^*(|g|) dV_{2n}: \int_{\Omega} \varphi^*(|g|) dV_{2n} \leq 1 \right) \\ \leq \int_{\Omega} \varphi(|f|) dV_{2n} + 1.
\end{multline}

We present the following counterpart to the classical H\"older's inequality:

\begin{theorem}\label{hol}
If $f \in L^{\varphi}$ and $g \in L^{\varphi^*}$, then
\[
\left|\int_{\Omega} fg dV_{2n}\right| \leq \|f\|^0_{\varphi} \|g\|_{\varphi^*}, \quad \text{and} \quad \left|\int_{\Omega} fg dV_{2n}\right| \leq \|f\|_{\varphi} \|g\|^0_{\varphi^*}.
\]
\end{theorem}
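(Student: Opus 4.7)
The plan is to deduce both inequalities from the definitions of the two norms together with the Fenchel--Moreau duality $(\varphi^*)^* = \varphi$, which holds for admissible $\varphi$. The first inequality will follow by using a normalized version of $g$ as a test function in the supremum defining $\|f\|^0_\varphi$; the second will be immediate by symmetry once admissibility is shown to be preserved under the Legendre transform.

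First I would record the preliminary fact that for any admissible $\psi$ and any $h \in L^{\psi}$ with $\|h\|_\psi > 0$,
$$\int_\Omega \psi\!\left(\frac{|h|}{\|h\|_\psi}\right) dV_{2n} \leq 1.$$
This is obtained by picking a sequence $\lambda_k \searrow \|h\|_\psi$ from the set over which the infimum in the Luxemburg norm is taken, so that $\int_\Omega \psi(|h|/\lambda_k)\, dV_{2n} \leq 1$, and then invoking monotone convergence (noting that $\psi(|h|/\lambda_k)$ increases pointwise to $\psi(|h|/\|h\|_\psi)$ by monotonicity and continuity of $\psi$). With this in hand, I would prove the first inequality: assuming $\|g\|_{\varphi^*} > 0$ (the other case being trivial), the preliminary fact applied with $\psi = \varphi^*$ shows that $\tilde g := g/\|g\|_{\varphi^*}$ satisfies $\int_\Omega \varphi^*(|\tilde g|)\, dV_{2n} \leq 1$, so $\tilde g$ is an admissible test function in the supremum defining $\|f\|^0_\varphi$. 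Hence $\int_\Omega |f \tilde g|\, dV_{2n} \leq \|f\|^0_\varphi$, and multiplying through by $\|g\|_{\varphi^*}$ yields the first bound.

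For the second inequality I would appeal to the duality $(\varphi^*)^* = \varphi$: applying the previous argument with the roles of $\varphi$ and $\varphi^*$ interchanged (so that $f$ now plays the role of the Luxemburg-normalized factor, while $g$ is tested against the Orlicz supremum corresponding to $\varphi^*$) yields the stated bound with $\|f\|_\varphi \|g\|^0_{\varphi^*}$. The main obstacle is the verification that $\varphi^*$ is itself admissible in the sense of the paper, in particular the growth conditions $\lim_{t\to 0^+}\varphi^*(t)/t = 0$ and $\lim_{t\to\infty}\varphi^*(t)/t = \infty$, which follow from the corresponding properties of $\varphi$ by a standard convex-analytic computation using the supremum definition of the Legendre transform. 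Once these are in place, the double Legendre identity makes the argument symmetric in $\varphi$ and $\varphi^*$, and the proof concludes.
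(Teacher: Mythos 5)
The paper does not actually prove Theorem~\ref{hol}: it is stated without argument, as part of the background on Orlicz spaces imported from~\cite{M89}. So there is no ``paper's own proof'' to compare against, and what matters is whether your argument stands on its own.

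It does. The two ingredients you use are exactly the right ones: the ``norming'' property of the Luxemburg norm (that $\int_\Omega \psi(|h|/\|h\|_\psi)\,dV_{2n}\le 1$, so $h/\|h\|_\psi$ is a legitimate test function in the dual Orlicz-norm supremum), and the biduality $(\varphi^*)^*=\varphi$ so that the same mechanism applies with the roles of $\varphi$ and $\varphi^*$ swapped. Both steps are standard, and your use of monotone convergence to pass from $\lambda_k\searrow\|h\|_\psi$ to $\lambda=\|h\|_\psi$ is the correct way to justify the norming inequality. One small point worth making explicit: when you invoke the norming fact with $\psi=\varphi^*$, you need $\varphi^*$ to be increasing and (at least left-)continuous so that $\psi(|h|/\lambda_k)\nearrow\psi(|h|/\|h\|_\psi)$ pointwise. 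Both are automatic once you observe that $\varphi^*$ is convex, nonnegative, and finite on all of $[0,\infty)$ (finiteness follows from $\lim_{t\to\infty}\varphi(t)/t=\infty$), hence continuous on $[0,\infty)$; the superlinear and sublinear growth conditions you list as the ``main obstacle'' are true for $\varphi^*$ but are not actually needed anywhere in the argument, so you could drop that part of the discussion. Also, since $\varphi$ is convex, continuous, and real-valued on $[0,\infty)$, it is automatically lower semicontinuous and proper, so the Fenchel--Moreau identity applies with no further work.
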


Let $1 \leq m \leq n$ be integers, $n\geq 2$, and let $\alpha > 0$. A central tool in this paper is the Orlicz space generated by the function
\[
\varphi(t) = (1+t)^{\frac{n}{m}} (\log(1+t))^{\alpha},
\]
which will be denoted by $L^{\frac{n}{m}}(\log L)^{\alpha}$. The corresponding Orlicz norm will be denoted for simplicity by $\|\cdot\|_{\alpha}$.

The following example will be used in Theorem~\ref{DK} and Theorem~\ref{t1}

\begin{example}
Let $K \subset \Omega$ be such that $0 < V_{2n}(K) < \infty$, and let $\varphi$ be an admissible function. Then,
\[
\|\chi_K\|_{\varphi} = \frac{1}{\varphi(V_{2n}(K)^{-1})},
\]
and
\[
\|\chi_K\|_{\varphi}^0 = V_{2n}(K)(\varphi^*)^{-1}\left(V_{2n}(K)^{-1}\right),
\]
where $(\varphi^*)^{-1}$ denotes the inverse function to $\varphi^*$. If $f \in L^{\varphi}$, then we have:
\begin{equation}\label{o1}
\int_K f dV_{2n} \leq \|f\|_{\varphi} \|\chi_K\|_{\varphi^*}^0 = \|f\|_{\varphi} V_{2n}(K) \varphi^{-1}(V_{2n}(K)^{-1}),
\end{equation}
where $\varphi^{-1}$ is the inverse function to $\varphi$. $\hfill{\Box}$
\end{example}

\subsection{Lambert $W$ function}\label{SS: Lambert}

 We present in this subsection some notations and elementary facts concerning the Lambert $W$ function, $\operatorname{W}_0$, which will be helpful later on several occasions.

First recall that $\operatorname{W}_0(x)$, for $x>0$ is defined as the unique solution to the equation
\[
\operatorname{W}_0(x)\exp(\operatorname{W}_0(x))=x.
\]
It is well known that
\[
\operatorname{W}_0'(x)=\frac {\operatorname{W}_0(x)}{x(1+\operatorname{W}_0(x))}>0.
\]
By~\cite{HH08} we have the following estimates:
\begin{enumerate}
\item for $x\geq e$
\[
\frac 12\log (x)\leq \operatorname{W}_0(x)\leq \log (x);
\]

\item for $x\geq e$
\[
\log (x)- \log(\log (x))\leq \operatorname{W}_0(x)\leq \log (x)- \frac 12\log(\log (x)).
\]
\end{enumerate}
From the above we can deduce that for $x\geq 0$ it holds:
\begin{equation}\label{estL}
\operatorname{W}_0(x)\leq \max(1,\log x).
\end{equation}
For further information on Lambert $W$ function we refer the reader to~\cite{Mez22}.

\section{Volume Estimation via Capacity}\label{Sec3: DK}

Dinew and Ko\l odziej in~\cite{DK14} proved that the volume of a relatively compact set can be estimated using capacity. They established that for $1 < \alpha < \frac{n}{n-m}$, the following inequality holds:
\[
V_{2n}(K) \leq C(\operatorname{cap}_m(K))^{\alpha}.
\]
This result has proven essential for studying the complex Hessian operator in $\mathbb{C}^n$ and on compact K\"ahler manifolds. However, for our aim in Section~\ref{Sec5: DP}, this result is not sufficient. Consequently, we aim to refine the Dinew-Ko\l odziej estimate in Theorem~\ref{DK}. First, we list two known facts required for the proof.

\begin{lemma}[Theorem 1 in~\cite{Ko96}]\label{kol}
Let $h$ be an increasing function such that
\[
\int_1^{\infty} \frac{1}{y h^{\frac{1}{n}}(y)} \, dy < \infty,
\]
and let $\mu$ be a non-negative measure on a bounded, strictly pseudoconvex domain $\Omega \subset \mathbb{C}^n$. Assume that there exists a constant $A > 0$ such that for any $v \in \mathcal{PSH}(\Omega) \cap \mathcal{C}(\bar{\Omega})$, with $v = 0$ on $\partial \Omega$ and $\int_{\Omega} (dd^c v)^n \leq 1$ it holds:
\[
\int_{\Omega} (-v)^n h(-v) d\mu \leq A .
\]
Then any bounded plurisubharmonic function $u$ with $(dd^c u)^n = \mu$ will satisfy
\[
\|u\|_{\infty} \leq B(A, h),
\]
where the constant $B(A, h)$ does not depend on $\mu$.
\end{lemma}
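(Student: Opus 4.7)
The plan is to carry out Kolodziej's classical capacity iteration. By translating $u$ and comparing against the solution of the Dirichlet problem with boundary data $u|_{\partial\Omega}$, reduce to the case $u \leq 0$ in $\Omega$ and $u = 0$ on $\partial\Omega$; strict pseudoconvexity of $\Omega$ ensures continuous boundary behaviour. Set $U(s) := \{u < -s\}$ and $a(s) := \operatorname{cap}(U(s), \Omega)$. The goal is to produce an explicit $S = S(A,h)$ with $a(S) = 0$, so that $\|u\|_{\infty} \leq S$.

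Two ingredients drive the induction. The first is the classical consequence of the comparison principle: for all $s, t > 0$,
$$
t^{n} a(s+t) \leq \int_{U(s)} (dd^{c} u)^{n} = \mu(U(s)),
$$
obtained by testing $u + s$ against competitors $t w$ with $-1 \leq w \leq 0$ in the supremum defining $\operatorname{cap}(U(s+t), \Omega)$. The second converts the hypothesis into a quantitative mass bound. Let $u_{s}^{*}$ denote the upper semicontinuous regularization of the relative extremal function of $U(s)$ in $\Omega$; it satisfies $-1 \leq u_{s}^{*} \leq 0$, vanishes on $\partial\Omega$, equals $-1$ quasi-everywhere on $U(s)$, and has $\int_{\Omega} (dd^{c} u_{s}^{*})^{n} = a(s)$. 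Scaling $v := a(s)^{-1/n} u_{s}^{*}$ makes $\int_{\Omega} (dd^{c} v)^{n} \leq 1$, so the hypothesis (after approximating $u_{s}^{*}$ from above by continuous psh functions vanishing on $\partial\Omega$, available since $\Omega$ is strictly pseudoconvex) yields
$$
a(s)^{-1} h\bigl(a(s)^{-1/n}\bigr)\, \mu(U(s)) \leq \int_{\Omega} (-v)^{n} h(-v)\, d\mu \leq A,
$$
i.e., $\mu(U(s)) \leq A\, a(s) / h\bigl(a(s)^{-1/n}\bigr)$.

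Combining the two displays gives the recursion $a(s+t) \leq A t^{-n} a(s) / h\bigl(a(s)^{-1/n}\bigr)$. Choose $s_{0} := 0$ and $s_{k+1} := s_{k} + t_{k}$ with $t_{k} := (2A)^{1/n} h\bigl(a(s_{k})^{-1/n}\bigr)^{-1/n}$, so that $a(s_{k+1}) \leq a(s_{k})/2$. Induction gives $a(s_{k}) \leq a(0)\, 2^{-k}$, hence $a(s_{k})^{-1/n} \geq a(0)^{-1/n} 2^{k/n}$. A comparison of the series $\sum_{k} h\bigl(a(s_{k})^{-1/n}\bigr)^{-1/n}$ with the integral $\int_{1}^{\infty} dy / (y\, h(y)^{1/n})$, convergent by hypothesis, then shows $S := \sum_{k} t_{k} < \infty$. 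Since $a(S) = 0$ forces the open set $U(S)$ to be empty, one obtains $\|u\|_{\infty} \leq S = B(A,h)$, depending only on $A$, $h$, and (through $a(0)$) on $\Omega$.

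The principal obstacle lies in the second ingredient: the hypothesis is stated for $v \in \mathcal{PSH}(\Omega) \cap \mathcal{C}(\bar\Omega)$, whereas $u_{s}^{*}$ is merely bounded and upper semicontinuous. I would resolve this by constructing a decreasing sequence of continuous psh functions $v_{j} \searrow v$ with $v_{j} = 0$ on $\partial\Omega$ (available from the strict pseudoconvexity of $\Omega$), verifying weak convergence $\int (dd^{c} v_{j})^{n} \to \int (dd^{c} v)^{n}$ and applying monotone convergence in the Orlicz-type integral $\int (-v_{j})^{n} h(-v_{j})\, d\mu$, so that the hypothesis passes to the limit and delivers the required bound on $\mu(U(s))$.
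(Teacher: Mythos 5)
The paper does not prove this lemma; it cites it directly from Ko{\l}odziej~\cite{Ko96}, so the comparison is to the known argument. Your reconstruction isolates the two correct ingredients: the comparison-principle inequality $t^{n}\operatorname{cap}(U(s+t),\Omega) \leq \mu(U(s))$, and the domination $\mu(U(s)) \leq A\, a(s)/h\bigl(a(s)^{-1/n}\bigr)$ obtained by testing the hypothesis against the normalized relative extremal function of $U(s)$. The approximation point you flag at the end — passing from continuous $v$ to the merely bounded usc $u_{s}^{*}$ by a decreasing sequence of continuous psh functions and monotone convergence, using Bedford--Taylor continuity of $(dd^{c}\cdot)^{n}$ under decreasing limits and the regularity of $\Omega$ — is genuinely needed and is handled correctly.

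The gap is in the initialization and hence in the uniformity claim. You start the iteration at $s_{0}=0$ and assert that $S$ depends only on $A$, $h$ and, ``through $a(0)$,'' on $\Omega$. But $a(0)=\operatorname{cap}(\{u<0\},\Omega)$ depends on $u$, equivalently on $\mu$, not only on $\Omega$; it can be arbitrarily large and is in general infinite, since for a bounded strictly pseudoconvex $\Omega$ one has $\operatorname{cap}(\Omega,\Omega)=\infty$ and $\{u<0\}$ can exhaust $\Omega$. Correspondingly, the comparison of $\sum_{k} h\bigl(a(s_{k})^{-1/n}\bigr)^{-1/n}$ with $\int_{1}^{\infty} dy/\bigl(y\,h(y)^{1/n}\bigr)$ only controls the terms with $a(s_{k})^{-1/n}\geq 1$, i.e.\ $a(s_{k})\leq 1$; the hypothesis says nothing about $h$ on $(0,1)$, so the initial block of indices with $a(s_{k})>1$ contributes an amount on the order of $\log_{2}a(0)$ that is not controlled by $A$ and $h$. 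As written, your $S$ is therefore not a bound of the form $B(A,h)$ independent of $\mu$. What is missing is a separate argument producing some $s_{0}\leq B_{1}(A,h)$ with $a(s_{0})\leq 1$ (equivalently, a bound on $\mu(\Omega)$, or on the first passage time to unit capacity, in terms of $A$ and $h$ alone); this is the quantitatively delicate step in Ko{\l}odziej's proof and is absent from your sketch.
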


\begin{lemma}[Theorem 4.1 in \cite{ACKPZ09}]\label{A-C-K-P-Z}
Let $v \in \mathcal{F}_n$ with $\int_{\Omega} (dd^c v)^n \leq 1$. For all $s > 0$, it follows that
\[
V_{2n}(\{v \leq -s\}) \leq C_n (1 + s)^{n-1} \exp(-2ns).
\]
\end{lemma}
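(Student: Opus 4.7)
The plan is to reduce the distribution-function estimate to a capacity bound on the sublevel sets and then apply a sharp volume--capacity inequality in $\mathbb{C}^n$.

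As a first step I would establish the capacity bound
$$\operatorname{cap}_n\bigl(\{v\leq -s\},\Omega\bigr)\leq \frac{1}{s^n}\int_\Omega(dd^c v)^n\leq \frac{1}{s^n}.$$
Given any candidate $u\in\mathcal{PSH}(\Omega)$ with $-1\leq u\leq 0$, the scaled function $su$ lies in $[-s,0]$, so $\{v<-s\}\subset\{v<su\}$. Applying the comparison principle to $v\in\mathcal{F}_n$ and the bounded plurisubharmonic function $su$ yields
$$s^{n}\int_{\{v<-s\}}(dd^c u)^n\leq \int_{\{v<su\}}(dd^c(su))^n\leq \int_{\{v<su\}}(dd^c v)^n\leq 1,$$
and taking the supremum over admissible $u$ gives the claim.

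In the second step, I would invoke a sharp Alexander--Taylor type volume--capacity inequality: for every compact $K\Subset\Omega$,
$$V_{2n}(K)\leq C_n\bigl(1+\operatorname{cap}_n(K)^{-1/n}\bigr)^{n-1}\exp\!\bigl(-2n\,\operatorname{cap}_n(K)^{-1/n}\bigr).$$
A short computation shows that the function $x\mapsto (1+x)^{n-1}e^{-2nx}$ is monotone decreasing on $[0,\infty)$ for every $n\geq 1$, so substituting the capacity bound from Step~1 (which gives $\operatorname{cap}_n(\{v\leq -s\})^{-1/n}\geq s$) directly produces $V_{2n}(\{v\leq -s\})\leq C_n(1+s)^{n-1}e^{-2ns}$.

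The main obstacle is establishing the sharp volume--capacity inequality in the second step, with the precise exponential rate $2n$ and polynomial prefactor $(1+s)^{n-1}$. The sharp exponent reflects the logarithmic growth rate of the Siciak--Zaharyuta extremal function in $\mathbb{C}^n$; a natural route is to enclose $\Omega$ in a ball, compare the relative extremal function of $K$ there against a radial logarithmic model $c\log(|z|/R)$, and track the volumes of its sublevel sets. The polynomial prefactor $(1+s)^{n-1}$ then emerges naturally from the expansion of $(dd^c\log|z|^2)^{\,n}$ away from the origin, i.e.\ from $n-1$ integrations by parts against $(dd^c|z|^2)^{n-1}$, which is precisely where the Kähler power of the ambient form contributes.
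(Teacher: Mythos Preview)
The paper does not prove this lemma; it quotes it verbatim as Theorem~4.1 of \cite{ACKPZ09} and uses it as a black box inside the proof of Theorem~\ref{DK}. So there is no in-paper argument to compare against, and your proposal has to stand on its own as a reconstruction of the result from \cite{ACKPZ09}.

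Your Step~1 is correct and standard. The problem is that the volume--capacity inequality you invoke in Step~2 is not an independent tool but is \emph{equivalent} to the lemma you are trying to prove. Given a compact $K$, apply the lemma to the normalized relative extremal function $w=\operatorname{cap}_n(K)^{-1/n}u_K^*$, which lies in $\mathcal{F}_n$ with unit Monge--Amp\`ere mass, at level $s=\operatorname{cap}_n(K)^{-1/n}$; since $K\subset\{u_K^*\le-1\}=\{w\le-s\}$ one recovers precisely your Step~2 bound. Conversely your own deduction shows that Step~2 implies the lemma. So Steps~1 and~2 together are a reformulation, not a proof, and the entire content is deferred to your final paragraph.

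That sketch does point to the right mechanism---comparing the relative extremal function in an ambient ball with a radial logarithmic model is exactly the Alexander--Taylor comparison on which the argument in \cite{ACKPZ09} rests---so the architecture is sound. But as written it is not a proof: you do not carry out the comparison, and your explanation for the prefactor is incorrect. The current $(dd^c\log|z|^2)^n$ vanishes identically away from the origin, so nothing ``emerges from its expansion'' there, and no integrations by parts against $(dd^c|z|^2)^{n-1}$ are involved. In \cite{ACKPZ09} the polynomial factor $(1+s)^{n-1}$ arises instead from the quantitative form of the Alexander--Taylor comparison between relative and global (logarithmic) capacity, which carries a logarithmic defect that becomes a power of $s$ after inversion. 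To complete your proposal you would have to run that comparison explicitly and track the constants; at present the load-bearing estimate is asserted, not established.
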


We are now ready to present the first result of this paper.

\begin{theorem}\label{DK}
Let $\Omega$ be a bounded $m$-hyperconvex domain in $\mathbb C^n$. Then for any $0<\epsilon\leq \frac {n+1}{3n}$ there exist constants $C_1, C_2>0$ such that for all $K\Subset \Omega$ it  holds:
\begin{equation}\label{est}
V_{2n}(K)\leq C_1\operatorname {cap}_m(K)^{\frac {n}{n-m}}\operatorname{W}_0\left(C_2\operatorname {cap}_m(K)^{\frac {-1}{m(1+\epsilon)}}\right)^{\frac {nm(1+\epsilon)}{n-m}},
\end{equation}
where $\operatorname{W}_0$ is the Lambert $W$ function.
\end{theorem}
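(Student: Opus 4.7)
The plan is to refine the Dinew-Ko\l odziej argument by invoking Ko\l odziej's $L^\infty$ criterion (Lemma~\ref{kol}) with a weight function on the borderline of the integrability condition $\int_1^\infty dy/(y h^{1/n}(y)) < \infty$; the Lambert $W$ correction in~(\ref{est}) will emerge from the optimization of this critical choice. I first set $T := \operatorname{cap}_m(K)$ and bring in the relative $m$-extremal function $u_K^\ast \in \F$ (satisfying $u_K^\ast = -1$ quasi-everywhere on $K$ and $\int_\Omega \HH(u_K^\ast) = T$), together with an auxiliary plurisubharmonic potential $U \in \mathcal{F}_n(\Omega)$ solving the complex Monge-Amp\`ere Dirichlet problem $(dd^c U)^n = \lambda \chi_K\, dV_{2n}$ with zero boundary values, for a normalizing constant $\lambda > 0$ to be optimized.

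I next choose the borderline weight $h(t) = \max(1, \log t)^{n(1+\epsilon)}$, for which the Ko\l odziej integrability condition reduces to $\int dy/(y (\log y)^{1+\epsilon}) < \infty$. For any PSH competitor $v$ with $v|_{\partial\Omega} = 0$ and $\int (dd^c v)^n \leq 1$, a layer-cake decomposition yields
\[
\int_\Omega (-v)^n h(-v)\, \lambda \chi_K\, dV_{2n} = \lambda \int_0^\infty V_{2n}(K \cap \{v < -s\})\, \phi'(s)\, ds,
\]
where $\phi(s) = s^n h(s)$. Splitting the integral at a threshold $s_0$ -- bounding $V_{2n}(K \cap \{v < -s\}) \leq V_{2n}(K)$ for $s \leq s_0$ and using the exponential decay of Lemma~\ref{A-C-K-P-Z} for $s > s_0$ -- leads to a balancing equation of the shape $s_0 e^{s_0} \sim V_{2n}(K)^{-c}$, whose solution is precisely $s_0 \sim \operatorname{W}_0(C\, V_{2n}(K)^{-c})$: this is the origin of the Lambert $W$ factor.

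Feeding the resulting bound into Lemma~\ref{kol} gives $\|U\|_\infty \leq B$, with $B$ an explicit function of $V_{2n}(K)$ involving $\operatorname{W}_0$. Since $\mathcal{PSH}(\Omega) \subset \mathcal{SH}_m(\Omega)$, the normalized potential $U/\|U\|_\infty$ is admissible in the definition of the $m$-capacity, so
\[
T \;=\; \operatorname{cap}_m(K) \;\geq\; \|U\|_\infty^{-m} \int_\Omega (dd^c U)^m \wedge \be.
\]
A Dinew-type mixed Monge-Amp\`ere inequality bounds the integral on the right from below in terms of the total mass $\lambda V_{2n}(K) = \int_\Omega (dd^c U)^n$; rearranging and choosing $\lambda$ optimally delivers~(\ref{est}) with the exact exponent $n/(n-m)$ on $T$ and $nm(1+\epsilon)/(n-m)$ on the Lambert factor.

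The principal obstacle is twofold. First, one needs a \emph{lower} bound on $\int_\Omega (dd^c U)^m \wedge \be$, which runs against the usual direction of H\"older-type inequalities for mixed Monge-Amp\`ere operators and requires an argument specific to the Dirichlet datum $\lambda \chi_K dV_{2n}$; this is the step where the factor $V_{2n}(K)^{(n-m)/n}$ naturally emerges. Second, the exponents must be tracked with care through the optimization so that both the Lambert power $nm(1+\epsilon)/(n-m)$ and the upper bound $\epsilon \leq (n+1)/(3n)$ come out in exactly the stated form.
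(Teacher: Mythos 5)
Your proposal shares the paper's broad architecture: construct an auxiliary plurisubharmonic potential with Monge--Amp\`ere density concentrated on $K$, invoke Ko\l odziej's $L^\infty$ criterion (Lemma~\ref{kol}), apply Dinew's mixed Monge--Amp\`ere inequality to get an $m$-Hessian lower bound, normalize, and compare with the $m$-capacity. However, there are concrete gaps in the implementation.

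First, you misattribute the origin of the Lambert $W$ factor. In the paper the density coefficient $F(V_{2n}(K)) = V_{2n}(K)^{-1}(-\log V_{2n}(K))^{-n-n\epsilon}$ is chosen precisely so that, via Orlicz--Young duality with the Young function $\Phi(t)=\exp\bigl(2n(1-\epsilon)(t+1)^{1/(n+n\epsilon)}\bigr)$, the quantity $\int(-v)^n h(-v)\,d\mu$ is bounded by a constant \emph{independent of $K$}; consequently $\|\varphi\|_\infty \leq 1/d$ with $d$ a pure constant, and the Lambert $W$ enters only at the very end when one algebraically inverts $\operatorname{cap}_m(K) \geq d^m G_{p,q}(V_{2n}(K))$ with $G_{p,q}(t)=t^q(-\log t)^p$. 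Your layer-cake split at a threshold $s_0$ produces a balancing equation of the form $s_0^{n-2}e^{-2ns_0} \sim V_{2n}(K)$, which is indeed Lambert-type, but this is a \emph{different} $W$ (of a power of $V_{2n}(K)$, involving the decay rate $2n$ from Lemma~\ref{A-C-K-P-Z}), not the $\operatorname{W}_0\!\left(C_2\operatorname{cap}_m(K)^{-1/(m(1+\epsilon))}\right)$ that appears in~(\ref{est}). Moreover, the resulting $\lambda$ carries $(\log\log V_{2n}(K)^{-1})^{n(1+\epsilon)}$-type corrections rather than the clean $(-\log V_{2n}(K))^{n+n\epsilon}$ factor, so the final inversion step would not obviously collapse to the stated form; this reconciliation is left unaddressed.

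Second, the "principal obstacle" you flag --- the lower bound on $\int (dd^c U)^m\wedge\beta^{n-m}$ --- is not actually an obstacle. Dinew's mixed Monge--Amp\`ere inequality~\cite{Din09} is a pointwise Maclaurin-type inequality: if $(dd^c U)^n = \lambda\chi_K\,dV_{2n}$ then $(dd^c U)^m\wedge\beta^{n-m} \geq \lambda^{m/n}\chi_K\,dV_{2n}$ directly, and integrating over $K$ yields exactly the factor $\lambda^{m/n}V_{2n}(K)$ you seek. The paper uses this without any difficulty (this is inequality~(\ref{mix})). Finally, you never locate the source of the constraint $\epsilon \leq \frac{n+1}{3n}$, which in the paper arises from the requirement that $\Phi$ be convex (needed for the Orlicz norm machinery); it is unclear what would force the same bound in your threshold argument, and your extremal function $u_K^\ast$ plays no role in the rest of the proposal.
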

\begin{proof}
Without loss of generality, we may assume that $\Omega$ is a bounded strictly pseudoconvex set with $V_{2n}(\Omega)\leq 1$. Otherwise, there exists some $R > 0$ such that $\Omega \subset B(0, R)$ and $\operatorname{cap}_{K(0, R)}(K) \leq \operatorname{cap}_{\Omega}(K)$. Consider a compact set $K$ with $V_{2n}(K) > 0$. If not, there is nothing to prove.

Fix $0 < \epsilon \leq \frac{n+1}{3n}$ and define the function
\[
F(t) = t^{-1} (-\log t)^{-n-n\epsilon}.
\]

Let $\varphi$ be a plurisubharmonic solution to the Monge-Amp\`ere equation:
\[
(dd^c \varphi)^n = F(V_{2n}(K)) \chi_K dV_{2n}, \quad \text{and } \varphi = 0 \text{ on } \partial \Omega.
\]
Then, by the inequality between mixed Monge-Amp\`ere measures (see~\cite{Din09}), we obtain
\begin{equation}\label{mix}
\HH(\varphi) \geq F^{\frac{m}{n}}(V_{2n}(K)) \chi_K dV_{2n}.
\end{equation}
Define
\[
\Phi(t) = \exp\left(2n(1-\epsilon)(t+1)^{\frac{1}{n+n\epsilon}}\right)
\]
and note that, given the choice of $\epsilon$, the function $\Phi$ is increasing and convex.

We apply Lemma~\ref{kol} with $h(t) = (\log(1+t))^{n+n\epsilon}$ and $\mu = F(V_{2n}(K)) \chi_K dV_{2n}$. Consider $v \in \mathcal{PSH}(\Omega) \cap \mathcal{C}(\bar{\Omega})$, with $v = 0$ on $\partial \Omega$ and $\int_{\Omega} (dd^c v)^n \leq 1$. From Theorem~\ref{hol}, (\ref{o2}) and (\ref{o1}), we have
\begin{multline}\label{orlicz}
\int_{\Omega} (-v)^n h(-v) d\mu = \int_{\Omega} (-v)^n h(-v) F(V_{2n}(K)) \chi_K dV_{2n} \\
\leq \|(-v)^n h(-v)\|_{\Phi} F(V_{2n}(K)) \|\chi_K\|_{\Phi^*}^0 \\
\leq \|(-v)^n h(-v)\|_{\Phi}^0 F(V_{2n}(K)) \|\chi_K\|_{\Phi^*}^0 \\
\leq \left(\int_{\Omega} \Phi((-v)^n h(-v)) dV_{2n} + 1\right) F(V_{2n}(K)) V_{2n}(K) \Phi^{-1}\left(\frac{1}{V_{2n}(K)}\right),
\end{multline}
where $\Phi^{-1}$ is the inverse function,
\[
\Phi^{-1}\left(\frac{1}{s}\right) = \left(-\frac{\log s}{2n(1-\epsilon)}\right)^{n+n\epsilon} - 1 \leq \left(-\frac{\log s}{2n(1-\epsilon)}\right)^{n+n\epsilon}.
\]
Then
\begin{equation}\label{orlicz2}
F(V_{2n}(K)) V_{2n}(K) \Phi^{-1}\left(\frac{1}{V_{2n}(K)}\right) = \frac{1}{(2n(1-\epsilon))^{n+n\epsilon}} =\tilde C,
\end{equation}
where $\tilde C$ is a constant independent of $K$.

On the other hand, by Lemma~\ref{A-C-K-P-Z}, we have
\begin{multline}\label{orlicz3}
\int_{\Omega} \Phi((-v)^n h(-v)) dV_{2n} \leq C_n + \sum_{s=0}^{\infty} \int_{\{-s-1 < v < -s\}} \Phi((-v)^n h(-v)) dV_{2n} \\
\leq C_n\sum_{s=0}^{\infty} (1+s)^{n-1} \exp(-2ns) \exp\left(2n(1-\epsilon)((s+1)^n (\log(2+s))^{n+n\epsilon})^{\frac{1}{n+n\epsilon}}\right) \\
\leq A,
\end{multline}
where the constant $A$ does not depend on $v$.

Combining (\ref{orlicz}), (\ref{orlicz2}), and (\ref{orlicz3}), we conclude
\[
\int_{\Omega} (-v)^n h(-v) d\mu \leq \tilde C(A + 1),
\]
thereby, by Lemma~\ref{kol}, a constant $d$, independent of $K$, exists such that the solution $\varphi$ satisfies $\|\varphi\|_{\infty} \leq \frac{1}{d}$.

Now, let $\psi = d\varphi$, then
\begin{multline}\label{final}
\operatorname{cap}_m(K) \geq \int_{\Omega} \HH(\psi) = d^m \int_K \HH(\varphi)
\geq d^m \int_K F^{\frac{m}{n}}(V_{2n}(K)) dV_{2n} = \\
= d^m (V_{2n}(K))^{1 - \frac{m}{n}} (-\log(V_{2n}(K)))^{-(1+\epsilon)m}.
\end{multline}
Define the function $G_{p,q}(t) = t^q (-\log t)^p$, for $p < 0$, $q > 0$, which is increasing for $t \in (0, 1)$, with its inverse given by
\[
G_{p,q}^{-1}(s) = \frac{\left(-\frac{q}{p}\right)^{\frac{p}{q}} s^{\frac{1}{q}}}{\operatorname{W}_0\left(-\frac{q}{p} s^{\frac{1}{p}}\right)^{\frac{p}{q}}}.
\]

In our scenario $p = -(1+\epsilon)m$ and $q = 1-\frac{m}{n}$. Finally, it follows from (\ref{final}) that
\begin{align*}
V_{2n}(K) &\leq G_{p,q}^{-1}\left(d^{-m} \operatorname{cap}_m(K)\right) \\ & = C_1 \operatorname{cap}_m(K)^{\frac{n}{n-m}} \operatorname{W}_0(C_2 \operatorname{cap}_m(K)^{\frac{-1}{(1+\epsilon)m}})^{\frac{nm(1+\epsilon)}{n-m}},
\end{align*}
for some universal constants $C_1$ and $C_2$. The proof is complete.
\end{proof}

\begin{corollary}\label{cor}
Let $\Omega$ be a bounded $m$-hyperconvex domain in $\mathbb C^n$. Then for any $0<\epsilon\leq \frac {n+1}{3n}$ there exist constants $D_1, D_2>0$ such that for all $K\Subset\Omega$ it holds:
\begin{equation}\label{est2}
V_{2n}(K)\leq D_1\operatorname {cap}_m(K)^{\frac {n}{n-m}}\max(1,1-D_2\log (\operatorname {cap}_m(K))^{\frac {nm(1+\epsilon)}{n-m}}
\end{equation}
\end{corollary}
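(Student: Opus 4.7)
The plan is to deduce the corollary directly from Theorem~\ref{DK} by controlling the Lambert $W$ factor using the elementary bound~\eqref{estL}. Starting from the conclusion of Theorem~\ref{DK}, the only quantity that needs to be re-expressed is
\[
\operatorname{W}_0\bigl(C_2\operatorname{cap}_m(K)^{-1/(m(1+\epsilon))}\bigr),
\]
and the key observation is that the Lambert $W$ function grows at most logarithmically.

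First I would apply~\eqref{estL} with $x = C_2\operatorname{cap}_m(K)^{-1/(m(1+\epsilon))}$ to obtain
\[
\operatorname{W}_0(x)\;\leq\;\max\!\Bigl(1,\;\log C_2 - \tfrac{1}{m(1+\epsilon)}\log\operatorname{cap}_m(K)\Bigr).
\]
Write $c := \log C_2$ and $L := -\log\operatorname{cap}_m(K)$, so that the right-hand side becomes $\max(1,\,c + L/(m(1+\epsilon)))$. The goal is to compare this with $\max(1,\,1+D_2 L)$ for a suitable choice of $D_2$.

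Next I would carry out a short case analysis. Choose $D_2 := 1/(m(1+\epsilon))$. When $L\leq 0$ (i.e.\ $\operatorname{cap}_m(K)\geq 1$), both expressions are bounded by the constant $\max(1,c)$, so there is nothing to prove up to a multiplicative constant depending only on $c$. When $L>0$, I would use the elementary inequality
\[
\max(1,\,c+D_2 L)\;\leq\;(1+|c|)\,\max(1,\,1+D_2 L),
\]
which follows by considering separately whether $c+D_2 L$ exceeds $1$ and bounding $c+D_2 L\leq (1+|c|)(1+D_2 L)$ in the nontrivial case. Raising to the power $nm(1+\epsilon)/(n-m)$ yields a factor $(1+|c|)^{nm(1+\epsilon)/(n-m)}$, which is a constant independent of $K$ and can be absorbed into $D_1 := C_1(1+|\log C_2|)^{nm(1+\epsilon)/(n-m)}$.

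There is no real obstacle here: the statement is a cosmetic reformulation of Theorem~\ref{DK} using the elementary bound on $\operatorname{W}_0$. The only point requiring minor care is keeping track of the additive constant $\log C_2$ appearing inside the logarithm so that it is absorbed into the multiplicative constant $D_1$ rather than affecting the coefficient $D_2$ of $\log\operatorname{cap}_m(K)$.
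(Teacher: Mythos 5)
Your proposal is correct and follows exactly the route the paper takes: the paper's proof consists of the single sentence ``It follows from Theorem~\ref{DK} and the estimate~\eqref{estL} for the Lambert $W$ function,'' and your argument simply fills in the routine bookkeeping (absorbing the additive constant $\log C_2$ into $D_1$ and setting $D_2 = 1/(m(1+\epsilon))$). There is no gap; the only minor looseness is that the stated inequality $c+D_2L\leq(1+|c|)(1+D_2L)$ should be read as holding only in the regime $L\geq 0$, which is exactly how you use it.
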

\begin{proof}
It follows from Theorem~\ref{DK} and the estimate (\ref{estL}) for the Lambert $W$ function.
\end{proof}

\section{Continuous Solution}\label{Sec: Continuous Solution}

Let $n \geq 2$ and $1 \leq m \leq n$. In this section, we confine our discussion to a bounded strictly $m$-pseudoconvex domain $\Omega \subset \mathbb{C}^n$, as defined in Definition~\ref{stricm-hyp}. Consider the following Dirichlet problem for the complex Hessian equation given a density function $f$ and a boundary value function $g \in \mathcal{C}(\partial \Omega)$:
\begin{equation}
\label{Sec5: DP}
\begin{aligned}
\HH(U(f,g)) &= f dV_{2n}, \\
\lim_{z \to w} U(f,g)(z) &= g(w), \quad \text{for all } w\in\partial \Omega,
\end{aligned}
\end{equation}
where $dV_{2n}$ represents the Lebesgue measure in $\mathbb{R}^{2n}$. Theorem~\ref{t1} establishes that if the density function $f$ is in $L^{\frac{n}{m}}(\log L)^{\alpha}$ with $\alpha > 2n$, then the solution exists and is continuous on $\bar{\Omega}$. The proof is using the following result recently proved in~\cite{CZ23}.

\begin{theorem}\label{C-Z}
Let $\Omega \subset \mathbb{C}^n$ be a bounded strictly $m$-pseudoconvex domain and let $\mu$ be a positive finite Borel measure on $\Omega$ such that for all compact sets $K \subset \Omega$ the following holds:
\[
\mu(K) \leq A \operatorname{cap}_m(K) F(\operatorname{cap}_m(K)),
\]
where $F:(0,\infty) \to (0,\infty)$ is a continuous increasing function that satisfies
\[
\int_{0^+} \frac{F(t)^{\frac{1}{m}}}{t} \, dt < \infty.
\]
Then, for any positive continuous boundary function $g \in \mathcal{C}(\partial \Omega)$, there exists a unique continuous solution $U(\mu, g)$ of the Dirichlet problem for the complex Hessian equation (\ref{Sec5: DP}).
\end{theorem}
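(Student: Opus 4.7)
The plan is to adapt the Kołodziej strategy for the complex Monge--Ampère equation to the $m$-Hessian setting, where the integrability condition $\int_{0^+}F(t)^{1/m}/t\,dt<\infty$ is precisely what enables a uniform $L^\infty$-bound through an iteration lemma. Uniqueness of the continuous solution is immediate from the comparison principle for $m$-subharmonic functions (Lu~\cite{Lu12,Lu15}); the substantive content lies entirely in existence together with continuity up to the boundary.

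The scheme is to approximate $\mu$ by measures $\mu_j=f_j\,dV_{2n}$ with $f_j$ bounded (via truncation of a density part combined with mollification), chosen so that $\mu_j\to\mu$ weakly while the domination
\[
\mu_j(K)\le A\operatorname{cap}_m(K)\,F(\operatorname{cap}_m(K))
\]
is preserved with a $j$-independent constant. Because the hypothesis forbids $\mu$ from charging $m$-polar sets, such an approximation is available. For each $j$, classical regularity results (B\l ocki~\cite{Blo05}, Lu~\cite{Lu12}) provide a unique bounded, continuous solution $u_j:=U(\mu_j,g)$ on $\bar\Omega$. The task reduces to two claims: a uniform bound $\sup_j\|u_j\|_\infty<\infty$, and that $(u_j)$ is Cauchy in $\mathcal{C}(\bar\Omega)$. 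Granted these, the uniform limit $u$ is continuous on $\bar\Omega$ with $u=g$ on $\partial\Omega$, and continuity of $\HH$ under uniform limits of continuous $m$-subharmonic functions yields $\HH(u)=\mu$.

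For the uniform bound I would track the level-set function
\[
a_j(s):=\operatorname{cap}_m\bigl(\{u_j<-\|g\|_\infty-s\}\bigr)^{1/m}.
\]
A comparison-principle argument applied to $u_j+s+t$ against a suitable $m$-subharmonic test function, combined with the capacity hypothesis, produces a Kołodziej-type recurrence
\[
t\,a_j(s+t)\le C\,a_j(s)\,F\bigl(a_j(s)^m\bigr)^{1/m},\qquad s,t>0.
\]
The classical iteration lemma then applies thanks to the integrability of $F^{1/m}/t$ near $0$, yielding $a_j(s_0)=0$ for some $s_0$ independent of $j$, hence $\|u_j\|_\infty\le\|g\|_\infty+s_0$. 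For the Cauchy property I would run the same machinery on the auxiliary function $\max(u_j,u_k-\epsilon)-u_k+\epsilon$, extracting a modulus-type stability estimate of the form $\|u_j-u_k\|_\infty\le\Psi(\mu_j,\mu_k)$ whose driving term tends to $0$ as $\min(j,k)\to\infty$.

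The main obstacle is the second claim. The uniform $L^\infty$-bound is a now-standard Kołodziej-type iteration using the capacity domination directly, but upgrading it to a genuine quantitative stability estimate requires a delicate choice of auxiliary function so that the right-hand side of the recursion is not merely small but explicitly vanishes with $\mu_j-\mu_k$, while still respecting the capacity-domination hypothesis at every step. A subsidiary but non-trivial issue is the construction of the approximating sequence $\mu_j$ itself: one must truncate and regularize $\mu$ in such a way that the hypothesis is preserved with a \emph{uniform} constant $A$, which in the general singular case demands careful control near sets of small capacity where the density of $\mu$ (if it exists) may blow up.
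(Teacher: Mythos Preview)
The paper does not prove Theorem~\ref{C-Z}; it is quoted verbatim from Charabati and Zeriahi~\cite{CZ23} and used as a black box in the proof of Theorem~\ref{t1}. There is therefore no ``paper's own proof'' to compare your proposal against.

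That said, your outline is the standard Ko{\l}odziej-type scheme and is broadly the approach taken in~\cite{CZ23}: approximate $\mu$ by bounded-density measures preserving the capacity domination, solve the approximate problems, extract a uniform $L^\infty$-bound from the iteration lemma (this is exactly Lemma~\ref{lem}, also imported from~\cite{CZ23}), and pass to the limit via a stability estimate. You have correctly identified both the structure and the genuine difficulty: the uniform bound is routine given the integrability of $F^{1/m}/t$, while the stability step---showing the approximants form a Cauchy sequence in $\mathcal{C}(\bar\Omega)$---is where the real work lies. Your sketch of that step (running the recursion on $\max(u_j,u_k-\epsilon)-u_k+\epsilon$) is the right heuristic, though making the right-hand side vanish quantitatively with $\mu_j-\mu_k$ under only a capacity hypothesis, rather than an $L^p$ one, is precisely the contribution of~\cite{CZ23} and requires more than what you have written. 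The approximation issue you flag (preserving the domination with a uniform constant $A$) is also real and is handled in~\cite{CZ23} by a careful decomposition of $\mu$.
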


To provide an $L^{\infty}$-estimate of the solution we shall use the following lemma from~\cite{CZ23}.

\begin{lemma}\label{lem}
Let $h: [0, \infty) \to [0, \infty)$ be a decreasing, right-continuous function with $\lim_{s \to \infty} h(s) = 0$. Let $\eta: [0, \infty) \to [0, \infty)$ be a non-decreasing function that satisfies the integrability condition:
\[
\int_{0^+} \frac{\eta(t)}{t} \, dt < \infty.
\]
Assume for any $t \in [0, 1]$ and any $s > 0$, the inequality
\[
t h(s + t) \leq h(s) \eta(h(s))
\]
holds. Then $h(s) = 0$ for all $s \geq S_{\infty}$, where $S_{\infty}$ is defined as
\[
S_{\infty} = s_0 + e \int_0^{eh(s_0)} \frac{\eta(t)}{t} \, dt,
\]
and $s_0$ is determined by the condition $\eta(h(s_0)) \leq \frac{1}{e}$.
\end{lemma}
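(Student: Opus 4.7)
The plan is to run a De Giorgi-style iteration driven by the hypothesis $t\,h(s+t)\leq h(s)\eta(h(s))$. The central idea is: whenever $\eta(h(s))\leq 1/e$, the choice $t:=e\eta(h(s))$ lies in $[0,1]$ and may be plugged into the hypothesis, yielding
\[
h\bigl(s+e\eta(h(s))\bigr)\leq \frac{h(s)}{e}.
\]
So each admissible application simultaneously advances the argument by $e\eta(h(s))$ and contracts the value by a factor of $e$. Iterating this geometric decay will drive $h$ to $0$ in finite total time, and the bookkeeping of that total time will reproduce the formula for $S_\infty$.

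I would begin with two preparatory observations. First, the integrability $\int_{0^+}\eta(t)/t\,dt<\infty$ forces $\eta(0^+)=0$, since otherwise the integrand would dominate $c/t$ near zero. Combined with $h(s)\to 0$ as $s\to\infty$ and monotonicity of $\eta$, this makes $\eta(h(s))\to 0$, so a point $s_0$ with $\eta(h(s_0))\leq 1/e$ exists. Second, since $h$ is decreasing, $\eta(h(s))\leq \eta(h(s_0))\leq 1/e$ for all $s\geq s_0$, which guarantees that every iterated step size stays inside $[0,1]$ as required by the hypothesis. If at some stage $\eta(h(s))=0$, the hypothesis applied with $t=1$ gives $h(s+1)=0$, and by monotonicity the proof terminates.

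Next I would define recursively $s_{k+1}=s_k+e\eta(h(s_k))$ for $k\geq 0$. By the inequality above, $h(s_{k+1})\leq h(s_k)/e$, so inductively $h(s_k)\leq h(s_0)e^{-k}$. The only non-routine step is controlling the total increment
\[
\sum_{k\geq 0} e\eta(h(s_k)) \leq e\int_0^{eh(s_0)}\frac{\eta(t)}{t}\,dt.
\]
For this I would set $v_k=eh(s_0)e^{-k}$, so $v_{k+1}\geq h(s_k)$, and use the monotonicity of $\eta$ together with the fact that $[v_{k+1},v_k]$ has logarithmic length exactly $1$:
\[
\eta(h(s_k))\leq \eta(v_{k+1})=\eta(v_{k+1})\int_{v_{k+1}}^{v_k}\frac{dt}{t}\leq \int_{v_{k+1}}^{v_k}\frac{\eta(t)}{t}\,dt.
\]
Summing in $k$ telescopes to the claimed bound.

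Consequently the increasing sequence $s_k$ converges to some $s^{*}\leq s_0 + e\int_0^{eh(s_0)}\eta(t)/t\,dt=S_\infty$. Since $h$ is decreasing and $h(s_k)\leq h(s_0)e^{-k}\to 0$, monotonicity forces $h(s^{*})=0$, and hence $h\equiv 0$ on $[S_\infty,\infty)$. The main obstacle, and really the only place where a choice matters, is the sum-versus-integral comparison in the third paragraph: once the dyadic scale $v_k=eh(s_0)e^{-k}$ is chosen, the factor $e$ in front of the integral and the upper limit $eh(s_0)$ appear naturally from the geometric decay rate $1/e$. Everything else is elementary iteration and monotonicity; right-continuity of $h$ is not really needed for this argument, since $s^{*}$ is approached from the left.
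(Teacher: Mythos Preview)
The paper does not prove this lemma; it is quoted verbatim from Charabati--Zeriahi~\cite{CZ23}, so there is no in-paper argument to compare against. Your iteration is the standard De Giorgi--Ko{\l}odziej scheme one expects for such statements, and it is correct: the step $t=e\eta(h(s_k))$ gives the geometric contraction $h(s_{k+1})\le e^{-1}h(s_k)$, and your comparison of $\sum_k \eta(h(s_k))$ with $\int_0^{eh(s_0)}\eta(t)/t\,dt$ via the partition $v_k=eh(s_0)e^{-k}$ is exactly how the constants in $S_\infty$ arise.

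One small correction: your closing remark that right-continuity of $h$ is ``not really needed'' is not quite safe. In the degenerate situation where the iteration stalls, i.e.\ $\eta(h(s_K))=0$ at some finite $K$ (in particular $K=0$ with $\eta\equiv 0$ on $[0,eh(s_0)]$, so that $S_\infty=s_0$), the hypothesis gives $h(s_K+t)=0$ only for $t\in(0,1]$, hence $h\equiv 0$ on $(s_K,\infty)$ but not a priori at $s_K$ itself. If $S_\infty=s_K$ you need $h(s_K)=\lim_{t\to 0^+}h(s_K+t)=0$, and that is precisely right-continuity. In the generic case where $\eta(h(s_k))>0$ for all $k$, your observation that $s_k\nearrow s^*$ strictly suffices without it.
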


Using the above theorem and lemma, we proceed with our proof that the solution is continuous under the stated conditions.

\begin{theorem}\label{t1}
Let $\Omega \subset \mathbb{C}^n$ be a bounded strictly $m$-pseudoconvex domain, let $f \in L^{\frac{n}{m}}(\log L)^{\alpha}$ for $\alpha > 2n$, and let $g \in \mathcal{C}(\partial \Omega)$. Then, the unique solution $U(f, g)$ of the Dirichlet problem for the complex Hessian operator~\eqref{Sec5: DP} is continuous on $\bar{\Omega}$. Moreover, the following estimate holds:
\begin{multline}\label{bound}
\|U(f_1,g_1)-U(f_2,g_2)\|_{\infty} \leq \|g_1-g_2\|_{\infty}+ C_1 \|f_1-f_2\|_{\alpha}^{-\frac{1}{\gamma}}\\
+ C_2 e_{m,m}(U(|f_1-f_2|,0))^{\frac{1}{2m}} \exp\left(C_3 \|f_1-f_2\|_{\alpha}^{-\frac{1}{\gamma}}\right),
\end{multline}
where $\gamma = (1+\epsilon)m - \frac{\alpha m}{n}$ with $0 < \epsilon < \min\left(\frac{n+1}{3n},\frac {\alpha}{n}-2\right)$. Here, $\|f\|_{\alpha}$ denotes the norm in $L^{\frac{n}{m}}(\log L)^{\alpha}$, and $C_1, C_2, C_3$ are positive universal constants. Moreover, $e_{m,m}(u)$ is defined as $\int_{\Omega} (-u)^m \HH(u)$.
\end{theorem}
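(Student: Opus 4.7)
My plan proceeds in three stages. First, I would transform Corollary~\ref{cor} into a measure-capacity estimate that allows the use of Theorem~\ref{C-Z}. Second, I would reduce the stability inequality (\ref{bound}) to a bound on $\|U(|f_1-f_2|,0)\|_\infty$ via the Newton expansion of $\HH$ and the domination principle. Third, I would apply Lemma~\ref{lem} to produce that bound explicitly.

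For the first stage, the Orlicz function $\varphi(t)=(1+t)^{n/m}(\log(1+t))^\alpha$ satisfies $\varphi^{-1}(s)\sim s^{m/n}(\log s)^{-\alpha m/n}$ for $s$ large. H\"older's inequality in Orlicz spaces (Theorem~\ref{hol} and the example right after) combined with Corollary~\ref{cor} yields
\[
\int_K |f|\,dV_{2n}\le C\|f\|_\alpha\,\operatorname{cap}_m(K)\bigl(1+|\log\operatorname{cap}_m(K)|\bigr)^{\gamma},
\]
where $\gamma=(1+\epsilon)m-\alpha m/n$. The assumption $\epsilon<\alpha/n-2$ forces $\gamma/m<-1$, which (via $u=-\log t$) is exactly the Dini condition $\int_{0^+}F(t)^{1/m}t^{-1}\,dt<\infty$ with $F(t)=(1+|\log t|)^{\gamma}$ required by Theorem~\ref{C-Z}; that theorem therefore delivers a unique continuous solution $U(f,g)$.

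For the reduction, set $w=U(|f_1-f_2|,0)$ and $a=\|g_1-g_2\|_\infty$. Expanding $\HH(u_1+w)$ by the Newton binomial, the mixed terms are non-negative currents, so $\HH(u_1+w)\ge\HH(u_1)+\HH(w)\ge f_2\,dV_{2n}=\HH(u_2)$. Since $u_1+w-a=g_1-a\le g_2=u_2$ on $\partial\Omega$, the domination principle for $m$-subharmonic functions forces $u_1+w-a\le u_2$ on $\Omega$; exchanging $u_1\leftrightarrow u_2$ gives the symmetric inequality, and hence $\|u_1-u_2\|_\infty\le a+\|w\|_\infty$.

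Finally, to bound $\|w\|_\infty$, set $h(s)=\operatorname{cap}_m(\{w<-s\})^{1/m}$. Standard Hessian comparison, testing with $-s+t\phi$ for a capacity candidate $\phi$, gives $t^m h(s+t)^m\le\int_{\{w<-s\}}\HH(w)=\int_{\{w<-s\}}|f_1-f_2|\,dV_{2n}$. Substituting the measure-capacity bound produces
\[
t\,h(s+t)\le h(s)\,\eta(h(s)),\qquad \eta(r)=C\|f_1-f_2\|_\alpha^{1/m}\bigl(1+m|\log r|\bigr)^{\gamma/m},
\]
and $\int_{0^+}\eta(r)/r\,dr<\infty$ since $\gamma/m<-1$. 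Lemma~\ref{lem} then delivers $\|w\|_\infty\le s_0+e\int_0^{eh(s_0)}\eta(r)/r\,dr$. The substitution $u=-\log r$ computes the integral as $C_1\|f_1-f_2\|_\alpha^{-1/\gamma}$ (the exponents $1/m$ and $(\gamma+m)/(m|\gamma|)$ of $\|f_1-f_2\|_\alpha$ sum to $-1/\gamma$). For the threshold $s_0$, a Cauchy--Schwarz-type energy inequality $\int(-w)^m\HH(v)\le Ce_{m,m}(w)^{1/2}e_{m,m}(v)^{1/2}$ for capacity candidates $v$ produces $h(s)\le Ce_{m,m}(w)^{1/(2m)}/s$, so the choice $s_0=C_2 e_{m,m}(w)^{1/(2m)}\exp(C_3\|f_1-f_2\|_\alpha^{-1/\gamma})$ ensures $\eta(h(s_0))\le 1/e$; combining the two contributions yields (\ref{bound}). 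The main obstacle I anticipate is establishing the energy-capacity inequality with the precise exponent $1/(2m)$ on $e_{m,m}(w)$, which rests on a Cegrell/B{\l}ocki-type bound for mixed Hessians; the remaining work is careful Orlicz bookkeeping and tracking the constants through Lemma~\ref{lem}.
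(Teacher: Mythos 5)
Your proposal follows essentially the same path as the paper: the Orlicz H\"older inequality combined with Corollary~\ref{cor} gives the measure--capacity estimate with $F(t)=\max(1,1-D_2\log t)^{\gamma}$ and $\gamma/m<-1$, which feeds into Theorem~\ref{C-Z} for continuity; the comparison principle reduces the stability bound to $\|U(|f_1-f_2|,0)\|_\infty$; and Lemma~\ref{lem} with $h(s)=\operatorname{cap}_m(\{u<-s\})^{1/m}$ produces the explicit $S_\infty=s_0+e\int_0^{eh(s_0)}\eta(t)/t\,dt$. The only small deviation is in bounding $h(s_0)$: the paper applies the Charabati--Zeriahi inequality $t^m\operatorname{cap}_m(\{u<-s-t\})\le t^{-m}e_{m,m}(u)$ with $s=t$ to get $h(s)\le 4e_{m,m}(u)^{1/m}/s^2$, whereas you invoke a Cegrell-type Cauchy--Schwarz energy inequality to get $h(s)\le Ce_{m,m}(u)^{1/(2m)}/s$; both routes yield the same form $s_0=C_2\,e_{m,m}(u)^{1/(2m)}\exp(C_3\|f\|_\alpha^{-1/\gamma})$, so this is a cosmetic variation within the same argument.
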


\begin{proof} Define the function
\[
G_{\alpha,\frac{n}{m}}(t) = (1+t)^{\frac{n}{m}} (\log(1+t))^{\alpha},
\]
which is increasing and convex. Its inverse is given by
\[
G_{\alpha,\frac{n}{m}}^{-1}(s) = \frac{\left(\frac{n}{\alpha m}\right)^{\frac{\alpha m}{n}} s^{\frac{m}{n}}}{\operatorname{W}_0\left(\frac{n}{\alpha m} s^{\frac{1}{\alpha}}\right)^{\frac{\alpha m}{n}}} - 1,
\]
and the function $t \left(G_{\frac{n}{m}, \alpha}^{-1}(\frac{1}{t})+1\right)$ is increasing.
Assuming $0 < \epsilon < \min\left(\frac{n+1}{3n},\frac {\alpha}{n}-2\right)$ and choosing a relatively compact subset $K \subset \Omega$, using the Corollary~\ref{cor}, (\ref{o2}) and (\ref{o1}), we have:
\begin{equation}\label{est3}
\begin{aligned}
\mu(K) &= \int_K f dV_{2n} = \int_{\Omega} f \chi_K dV_{2n} \leq \|f\|_{\alpha} \|\chi_K\|_{G_{\alpha,\frac{n}{m}}^*}^0 \\
&\leq \left(\int_{\Omega} G_{\alpha,\frac{n}{m}}(f) dV_{2n} + 1\right) V_{2n}(K) G_{\frac{n}{m}, \alpha}^{-1}\left(\frac{1}{V_{2n}(K)}\right) \\
&\leq D_1 \operatorname{cap}_m(K) \max(1, 1 - D_2 \log (\operatorname{cap}_m(K)))^{\gamma},
\end{aligned}
\end{equation}
where $\gamma = (1+\epsilon)m - \frac{\alpha m}{n}$. Define
\[
F(t) = \max(1, 1 - D_2 \log t)^{\gamma}.
\]
Note that since by our assumption $\alpha>(2+\epsilon)n$, the function $t^{-1}F^{\frac 1m}(t)$ is locally integrable near zero, therefore by Theorem~\ref{C-Z} there exists a unique solution $U(f, g)$ of the Dirichlet problem for the complex Hessian operator (\ref{Sec5: DP}) which is continuous on $\bar{\Omega}$.

Now we shall provide the $L^{\infty}$-estimate of the solution $U(f,g)$. First, note that it follows from the comparison principle that:
\begin{multline}\label{est5}
|U(f_1, g_1) - U(f_2, g_2)| \leq -U(|f_1 - f_2|, -|g_1 - g_2|) \\ \leq -U(|f_1 - f_2|, 0) + \|g_1 - g_2\|_{\infty}.
\end{multline}
Therefore, it is sufficient to prove the result for the density function belonging to the Orlicz space $L^{\frac{n}{m}}(\log L)^{\alpha}$ and with boundary values equal to zero. Set, $u=U(|f_1 - f_2|,0)$.

From \cite{CZ23}, for all $s, t > 0$, it follows
\begin{equation}\label{est4}
t^m \operatorname{cap}_m(\{u < -s - t\}) \leq \int_{\{u < -s\}} \HH(u) \leq \frac{1}{t^m} e_{m,m}(u).
\end{equation}
Define
\[
h(t) = \operatorname{cap}_m(\{u < -t\})^{\frac{1}{m}},
\]
then (\ref{est3}) and (\ref{est4}) yields
\[
t h(t + s) \leq h(s) \eta(h(s))
\]
with
\[
\eta(t) = D_1^{\frac 1m} \left(\max\left(1, 1 - \frac{D_2}{m} \log t\right)\right)^{\frac{\gamma}{m}}.
\]
By Lemma~\ref{lem}, there exists $S_{\infty}$ such that
\[
h(t) = 0 \quad\text{for } t \geq S_{\infty},
\]
provided $\int_{0^+} \frac{\eta(t)}{t} < \infty$. Note that $\frac {\gamma}{m} < -1$, since $\alpha > (2+\epsilon)n$. The result then follows from the fact that if
\[
\operatorname{cap}_m(\{u<-t\})=0 \Rightarrow V_{2n}(\{u<-t\})=0, \ \text{ for } \ t\geq S_{\infty},
\]
then $u$ is bounded by $S_{\infty}$.

Next, we shall estimate the norm of  $u$. We need to examine $S_{\infty}$ and $s_0$. Assume $s_0$ is such that $\eta(h(s_0)) \leq \frac{1}{e}$, then the expression for $\eta(t)$ can be simplified to, see (\ref{est3}),
\[
\eta(t) = d_1^{\frac 1m} \|f\|_{\alpha}^{\frac{1}{m}} \left(d_2 - \frac 1m \log t\right)^{\frac {\gamma}{m}},
\]
where $d_1, d_2 > 0$ are universal constants. The condition $\eta(h(s_0)) = \frac{1}{e}$ leads us to the equation
\[
d_2 - \frac 1m \log h(s_0) = e^{-\frac{m}{\gamma}}d_1^{-\frac 1{\gamma}} \|f\|_{\alpha}^{-\frac{1}{\gamma}}.
\]
This equality implies that
\begin{multline}\label{int}
e \int_0^{eh(s_0)} \frac{\eta(t)}{t} dt = -\frac{ed_1^{\frac 1m}m^2}{\gamma + m} \|f\|_{\alpha}^{\frac{1}{m}} \left(d_2 - \frac 1m \log (eh(s_0))\right)^{\frac{\gamma + m}{m}} \\
= -\frac{ed_1^{\frac 1m}m^2}{\gamma + m} \|f\|_{\alpha}^{\frac{1}{m}} (e^{-\frac{m}{\gamma}}d_1^{-\frac 1{\gamma}} \|f\|_{\alpha}^{-\frac{1}{\gamma}} - m^{-1})^{\frac{\gamma + m}{m}} \\ \leq C_1 \|f\|_{\alpha}^{\frac{1}{m}} \|f\|_{\alpha}^{-\frac{\gamma + m}{\gamma m}} = C_1 \|f\|_{\alpha}^{-\frac{1}{\gamma}},
\end{multline}
where $C_1 > 0$ is a universal constant.

The condition $\eta(h(s_0)) \leq \frac{1}{e}$ is equivalent to
\[
\operatorname{cap}_m(\{u < -s_0\})^{\frac 1m} = h(s_0) \leq \exp\left(md_2 - m e^{-\frac{m}{\gamma}}d_1^{-\frac 1{\gamma}} \|f\|_{\alpha}^{-\frac{1}{\gamma}}\right).
\]
Therefore, applying (\ref{est4}) with $s = t$, we find $s_0$ that satisfies
\[
\left(\frac{4^m}{s_0^{2m}} e_{m,m}(u)\right)^{\frac 1m} = \exp\left(md_2 - m e^{-\frac{m}{\gamma}}d_1^{-\frac 1{\gamma}} \|f\|_{\alpha}^{-\frac{1}{\gamma}}\right),
\]
which can be rewritten as
\begin{equation}\label{s0}
s_0 = C_2 e_{m,m}(u)^{\frac{1}{2m}} \exp\left(C_3 \|f\|_{\alpha}^{-\frac{1}{\gamma}}\right),
\end{equation}
where $C_2, C_3 > 0$ are universal constants.

To finalize the proof, observe that the estimation (\ref{bound}) follows from (\ref{s0}) and (\ref{int}).
\end{proof}

\begin{remark} Theorem~\ref{t1} states that for $\alpha > 2n$, the solution $U(f,g)$ is continuous up to the boundary of a bounded strictly $m$-pseudoconvex domain $\Omega \subset \mathbb{C}^n$. It is unclear whether our assumption on the power $\alpha$ is optimal. Moreover, whether the condition on the domain $\Omega$ can be relaxed to an $m$-hyperconvex domain remains an open question.
\end{remark}

\section{Bounded solution}\label{section_bs}

In the previous section, we proved in Theorem~\ref{t1} that under the assumption $f \in L^{\frac{n}{m}}(\log L)^{\alpha}$ for $\alpha > 2n$, and $g \in \mathcal{C}(\partial \Omega)$, the unique solution $U(f, g)$ of the Dirichlet problem for the complex Hessian operator~\eqref{Sec5: DP} is continuous on $\bar{\Omega}$. This section focuses on the case where $\alpha > n$ and $\Omega \subset \mathbb{C}^n$ is a bounded, strictly $n$-pseudoconvex domain. In Theorem~\ref{t2}, we show that the solution to~\eqref{Sec5: DP} is bounded, and in Example~\ref{Ex BD}, we observe that when $\alpha \leq n$, the solution may be unbounded.

Lu and Guedj proved a result similar to Theorem~\ref{t2} in the setting of compact K\"ahler manifolds (see Theorem~B and the remark just above the acknowledgments in~\cite{GL23}).

\begin{theorem}\label{t2}
Let $\Omega \subset \mathbb{C}^n$ be a bounded strictly $n$-pseudoconvex domain, and let $f \in L^{\frac{n}{m}}(\log L)^{\alpha}$ for $\alpha > n$ and let $g \in \mathcal{C}(\partial \Omega)$. Then, there exists the unique solution $U(f, g)$ of the Dirichlet problem for the complex Hessian operator~\eqref{Sec5: DP} and it is bounded.
\end{theorem}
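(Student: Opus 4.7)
The plan is to exploit the fact that, since $\Omega$ is strictly $n$-pseudoconvex, it is strictly pseudoconvex in the classical sense. This allows us to build a bounded plurisubharmonic barrier via Ko\l odziej's $L^\infty$ estimate for the Monge-Amp\`ere equation and then descend to the $m$-Hessian equation through Dinew's mixed Monge-Amp\`ere inequality, following the same reduction architecture used in the proof of Theorem~\ref{DK}.

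I would first observe that $f \in L^{\frac{n}{m}}(\log L)^{\alpha}$ implies $f^{n/m} \in L^{1}(\log L)^{\alpha}$, since $(\log(1+t^{n/m}))^{\alpha}$ and $(\log(1+t))^{\alpha}$ differ by a multiplicative constant for $t\geq 1$ and both are bounded on $[0,1]$. Because $\alpha > n$, the integrability condition
\[
\int_{1}^{\infty}\frac{dy}{y\,(\log y)^{\alpha/n}} < \infty
\]
required by Lemma~\ref{kol} with $h(t)=(\log(1+t))^{\alpha}$ is satisfied. To apply Lemma~\ref{kol} to the measure $\lambda f^{n/m}\,dV_{2n}$ for a suitable $\lambda>0$, I would run an Orlicz-H\"older argument exactly parallel to (\ref{orlicz})--(\ref{orlicz3}) in the proof of Theorem~\ref{DK}, with the Young pair generated by $\Phi(t)=\exp(c(t+1)^{1/\alpha})$, whose growth matches Lemma~\ref{A-C-K-P-Z} precisely because $\alpha > n$. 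This produces a bounded plurisubharmonic function $\varphi$ on $\Omega$ with $\varphi = 0$ on $\partial\Omega$ and $(dd^c\varphi)^n = \lambda f^{n/m}\,dV_{2n}$. The inequality between mixed Monge-Amp\`ere masses, applied as in (\ref{mix}), then yields
\[
\HH(\varphi) = (dd^c\varphi)^m \wedge (dd^c|z|^2)^{n-m} \geq c_{n,m}\, f\, dV_{2n},
\]
and after rescaling $\varphi$ by a constant we may assume $c_{n,m}=1$. Since $\varphi$ is bounded plurisubharmonic, it is in particular bounded $m$-subharmonic and will serve as a one-sided barrier.

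Next I would construct $U(f,g)$ by monotone approximation. Set $f_j:=\min(f,j) \in L^{\infty}(\Omega)$ and let $U_j$ be the unique continuous $m$-subharmonic solution of $\HH(U_j) = f_j\,dV_{2n}$ with $U_j = g$ on $\partial\Omega$; these exist by the bounded-density Dirichlet theory (Caffarelli-Nirenberg-Spruck, B\l ocki, Lu) or equivalently by Theorem~\ref{C-Z}. Because $\HH(\varphi+\inf_{\partial\Omega}g) \geq f\,dV_{2n} \geq f_j\,dV_{2n} = \HH(U_j)$ and $\varphi+\inf g = \inf g \leq g = U_j$ on $\partial\Omega$, the comparison principle for bounded $m$-subharmonic functions gives $\varphi+\inf g \leq U_j$ in $\Omega$, while the maximum principle for $m$-subharmonic functions gives $U_j \leq \sup_{\partial\Omega}g$. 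Hence $\{U_j\}$ is uniformly bounded. Since $f_j$ is non-decreasing in $j$, comparison implies $U_j$ is non-increasing, so $U := \lim_j U_j$ is a bounded $m$-subharmonic function; continuity of the Hessian operator along decreasing bounded sequences gives $\HH(U)=f\,dV_{2n}$, while the correct boundary values $\lim_{z\to w}U(z)=g(w)$ follow from the standard barrier construction at strictly $m$-pseudoconvex boundary points. Uniqueness in the bounded class is the usual consequence of the comparison principle.

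The main obstacle I expect is the Ko\l odziej step in the second paragraph: carefully verifying the modular hypothesis of Lemma~\ref{kol} for $f^{n/m}$ in the Orlicz class $L^{1}(\log L)^{\alpha}$ with $\alpha > n$, so as to obtain a PSH barrier that is genuinely bounded on $\bar\Omega$. Once this is in hand, the descent to the $m$-Hessian via Dinew's mixed inequality and the monotone approximation are routine. The threshold $\alpha > n$ enters exactly where the auxiliary Monge-Amp\`ere problem requires it, which is consistent with the subsequent observation in Example~\ref{Ex BD} that $\alpha \leq n$ can indeed produce unbounded solutions.
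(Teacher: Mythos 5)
Your proposal is correct and follows the same architecture as the paper's proof: build a bounded plurisubharmonic barrier by solving an auxiliary Monge--Amp\`ere equation with density $(1+f)^{n/m}$ (the paper) or $f^{n/m}$ (you), invoke Ko{\l}odziej's $L^\infty$ estimate, descend to the $m$-Hessian equation via Dinew's mixed Monge--Amp\`ere inequality, and squeeze $U(f,g)$ between the barrier and $U(0,g)$ by the comparison principle. The threshold $\alpha>n$ enters exactly where you place it, in the Ko{\l}odziej step.

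The one place where the paper is more economical, and you should be aware of it, is that it does not re-run the Orlicz--H\"older machinery of Theorem~\ref{DK} nor build the solution by monotone approximation: it cites \cite{Ko96} directly for the existence of a bounded PSH solution $v$ with $(dd^c v)^n=(1+f)^{n/m}\,dV_{2n}$ and boundary value $g$, and it cites \cite{EG21} for the existence of $U(f,g)\in\mathcal{F}_m$ with generalized boundary data $U(0,g)$. Because the paper's barrier $v$ has boundary value $g$ (rather than $0$), the chain $v\le U(f,g)\le U(0,g)$ immediately yields both the uniform bound and the boundary values $\lim_{z\to w}U(f,g)(z)=g(w)$ without the separate local barrier argument that your approach needs at the end. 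Your version is more self-contained; the paper's is shorter and avoids the truncation/limit step and the final barrier check. Both arguments are sound.
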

\begin{proof}
Without loss of generality, we assume that $g \leq 0$. Assume that  $(1+f)^{n/m} (\log (1+f))^{\alpha} \in L^1$ for some $\alpha > n$, and set $h = (1+f)^{n/m}$. Then $h (\log h)^{\alpha} \in L^1$, and by~\cite{Ko96}, there exists a bounded plurisubharmonic (and hence $m$-subharmonic) solution $v$ to
\[
(dd^c v)^n = h dV_{2n}, \quad \text{and} \;  v = g \; \text{on} \; \partial \Omega.
\]
By the mixed Monge-Amp\`{e}re inequality~\cite{Din09}, we have
\[
(dd^c v)^m \wedge \beta^{n-m} \geq h^{m/n} dV_{2n} \geq f dV_{2n}.
\]
Additionally, as shown in~\cite{ACH18}, $U(0,g) \in \mathcal{SH}_m(\Omega)$ is a maximal and continuous $m$-subharmonic function satisfying
\[
\lim_{z \to w} U(0,g)(z) = g(w) \quad \text{for all} \; w \in \partial \Omega.
\]
It follows from~\cite{EG21} that there exists $U(f,g)$ in the Cegrell class $\mathcal{F}_m$ with generalized boundary value $U(0,g)$, and such that $\HH(U(f,g)) = f dV_{2n}$. Applying the comparison principle~\cite{EG21} gives
\[
v \leq U(f,g) \leq U(0,g),
\]
and thus $U(f,g)$ is a bounded $m$-subharmonic function satisfying
\[
\lim_{z \to w} U(f,g)(z) = g(w) \quad \text{for all} \; w \in \partial \Omega.
\]
\end{proof}

\begin{example}\label{Ex BD}
 Let $B(0,1) \subset \mathbb C^n$ be the unit ball, and consider a radially symmetric density function $f$ with $\alpha \leq n$. Set $g = f^{\frac{m}{n}}$. Let $U_m$ be a solution to $\HH(U_m) = f dV_{2n}$, with $\lim_{z \to w} U_m(z) = 0$ for all $w \in \partial B(0,1)$, and $U_n$ be a solution to $(dd^c U_n)^n = g dV_{2n}$, with $\lim_{z \to w} U_n(z) = 0$ for all $w \in \partial B(0,1)$.

 Then by~\cite{NVT18}, we have
 \[
 \begin{aligned}
   - U_m(z) &= \int_{|z|}^{1} t^{1-\frac{2n}{m}} F(t)^{\frac{1}{m}} dt, \quad F(t) = \frac{1}{2^{2n-m-1}(n-1)!} \int_0^t r^{2n-1} f(r) dr; \\
   - U_n(z) &= \int_{|z|}^{1} t^{-1} G(t)^{\frac{1}{n}} dt, \quad G(t) = \frac{1}{2^{n-1}(n-1)!} \int_0^t r^{2n-1} g(r) dr. \\
 \end{aligned}
 \]
 Using H\"older's inequality, we obtain
 \[
 \begin{aligned}
   G(t) &= \frac{1}{2^{n-1}(n-1)!} \int_0^t r^{2n-1} g(r) dr \\
   &\leq \frac{1}{2^{n-1}(n-1)!} \left( \int_0^t r^{2n-1} f(r) dr \right)^{\frac{m}{n}} \left( \int_0^t r^{2n-1} dr \right)^{\frac{n-m}{n}} \\
   &= C(n,m) F(t)^{\frac{m}{n}} t^{2n-2m},
 \end{aligned}
 \]
 where $C(n,m)$ is a constant depending only on $n$ and $m$. Again, using H\"older's inequality, we have
 \begin{equation}\label{ub}
 \begin{aligned}
   -U_n(z) &= \int_{|z|}^{1} t^{-1} G(t)^{\frac{1}{n}} dt \leq C(n,m)^{\frac{1}{n}} \int_{|z|}^{1} F(t)^{\frac{m}{n^2}} t^{1-\frac{2m}{n}} dt \\
   &\leq C(n,m)^{\frac{1}{n}} \left( \int_{|z|}^{1} t^{1-\frac{2n}{m}} F(t)^{\frac{1}{m}} dt \right)^{\frac{m^2}{n^2}} \left( \int_{|z|}^1 t^{1-\frac{2m}{n}} dt \right)^{\frac{n^2-m^2}{n^2}} \\
   &= D(n,m) (-U_m(z))^{\frac{m^2}{n^2}} \left(1 - |z|^{\frac{2n-2m}{n}} \right)^{\frac{n^2-m^2}{n^2}},
 \end{aligned}
 \end{equation}
 where $D(n,m)$ is a constant depending only on $n$ and $m$.

 Therefore, we have shown that if $U_n$ is unbounded, then $U_m$ is also unbounded. By Example~3.3 in~\cite{AC24}, there exists an unbounded plurisubharmonic function $U_n$ for which $(dd^c U_n)^n = g dV_{2n}$ with $g \in L(\log L)^n$. Therefore, by~\eqref{ub}, the $m$-subharmonic solution $U_m$ to $\HH(U_m) = g^{\frac{n}{m}} dV_{2n}$ is also unbounded.
\end{example}

\end{document}